\title{\LARGE \bf
Bang--Bang Optimal Control of Vaccination in \\ Metapopulation Epidemics with Linear Cost Structures \\
{\small \it Published in IEEE Control Systems Letters (L-CSS) at \url{https://ieeexplore.ieee.org/document/11018532} }
}
\author{Lucas~M.~Moschen and M.~Soledad~Aronna
\thanks{Manuscript received on March 17, 2025.
L. Moschen is funded by EPSRC DTP in Mathematical Sciences Grant No. EP/W523872/1 and supported by the ICL-CNRS Lab. 
The current research was mainly developed while the first author was a Master's student at FGV EMAp, with institutional financial support. 
M. Aronna is funded by the Brazilian agencies FAPERJ (processes E-26/203.223/2017 and E-26/201.346/2021) and CNPq (process 312407/2023-8).}
\thanks{For the purpose of open access, the author has applied a Creative Commons Attribution (CC-BY) license to any Author Accepted Manuscript version arising.}%
\thanks{Published in IEEE Control Systems Letters (L-CSS): \url{https://ieeexplore.ieee.org/document/11015582}}%
\thanks{© 2025 IEEE. Personal use of this material is permitted. 
Permission from IEEE must be obtained for all other uses, in any current or future media, including reprinting/republishing this material for advertising or promotional purposes, creating new collective works, for resale or redistribution to servers or lists, or reuse of any copyrighted component of this work in other works.}
\thanks{L.~M.~Moschen is with the Department of Mathematics, Imperial College London, SW7 2AZ, UK (e-mail: lucas.moschen22@imperial.ac.uk).}%
\thanks{M.~S.~Aronna is with the School of Applied Mathematics, Funcação Getulio Vargas (FGV EMAp), Rio de Janeiro, Brazil (e-mail: soledad.aronna@fgv.br).}%
}
\date{\today}
\newcommand{\R}{\mathbb{R}}
\newcommand{\fori}[1]{#1 \in \{1,\dots, K\}}
\newtheorem{assumption}{Assumption}
\newtheorem{lemma}{Lemma}
\newtheorem{proposition}{Proposition}
\newtheorem{theorem}{Theorem}
\newtheorem{remark}{Remark}
\begin{document}

\maketitle
\thispagestyle{empty}
\pagestyle{empty}

\begin{abstract}
This paper investigates optimal vaccination strategies in a metapopulation epidemic model.
We consider a linear cost to better capture operational considerations, such as the total number of vaccines or hospitalizations, in contrast to the standard quadratic cost assumption on the control.
The model incorporates state and mixed control-state constraints, and we derive necessary optimality conditions based on Pontryagin's Maximum Principle. 
We use Pontryagin's result to rule out the possibility of the occurrence of singular arcs and to provide a full characterization of the optimal control.
\end{abstract}

\section{Introduction}

Recent outbreaks of Ebola, Influenza, and COVID-19 have underscored the vulnerability of humanity to infectious diseases.
Mathematical models provide a framework for quantifying epidemic damage, forecasting, preventing, and mitigating future outbreaks.
In particular, vaccination has been shown to reduce the number of new cases and alleviate public health burdens.
However, optimal vaccination policies require tractable models that capture real-world distribution and administration constraints.

This paper studies optimal vaccination strategies in a general epidemic model defined over a metapopulation of $K$ distinct groups, capturing heterogeneous transmission dynamics and resource constraints. 
Metapopulation epidemic models, rooted in ecological patch models~\cite{levins1969some}, have been adapted for epidemiological applications~\cite{rvachev1985mathematical}, with a focus on direct graph structures and infections occurring at the nodes~\cite{sattenspiel1995structured, arino2003multi, colizza2007reaction}.
Recent work has enriched these models with time-varying parameters, demographic effects, and control strategies \cite{liu2013transmission, stolerman2015sir, yin_novel_2020}.

Motivated by these models, several authors have formulated optimal control problems using mainly quadratic costs for the control function \cite{ogren2002vaccination, asano2008optimal, rowthorn2009optimal}.
The single-patch linear-cost model has been completely characterized by~\cite{behncke2000optimal}.
Recent studies have benchmarked numerical schemes for metapopulation models \cite{salii2022benchmarking} and developed computational frameworks for COVID-19 control \cite{lemaitre2022optimal, nonato_robot_2022}.
Alongside these computational advances, the mathematical treatment of constrained optimal control in epidemiology has been considered with integral, state, and mixed control-state constraints~\cite{hansen2011optimal}.
Theoretical results on constrained SEIR models are presented in \cite{biswas2014seir}, while a control-affine SEIR model with time-specific vaccine limits is introduced in \cite{ de2015optimal}.
Recent contributions~\cite{avram2022optimal, trabelsi2023constrained} further refined these approaches by incorporating hospital capacity and supply constraints.

While most optimal vaccination models use quadratic cost functions for their convexity properties and ease of analysis \cite{gaff2009optimal,NeilanLenhart2010}, real-world vaccination costs often scale linearly or sublinearly due to economies of scale \cite{gandhi2014updating, portnoy2020producing}. 
This is because, as the number of vaccines administered increases, fixed costs (e.g., cold-chain infrastructure, training, logistics setup) are spread over more units.
We therefore investigate how this linear cost structure influences optimal vaccination strategies on metapopulations, demonstrating a bang--bang control behavior.
The main contributions of this paper are as follows.
\begin{itemize} 
    \item Proof of the existence of optimal solutions for the constrained vaccination control problem in a heterogeneous, multi-group epidemic model \eqref{eq:optimal_control_problem}; 
    \item Analysis of the optimal control problem with linear vaccination cost, including a formal proof that the optimal policy is bang--bang across $K$ regions;
    \item Complete characterization of the optimal control structure, with numerical experiments illustrating the theoretical results. 
\end{itemize}

The remainder of the paper is organized as follows. 
\autoref{sec:modeling} formulates the metapopulation epidemic vaccination model as an optimal control problem, and \autoref{sec:optimality} presents necessary optimality conditions for it.
\autoref{sec:main_results} is dedicated to the main results, namely the absence of singular arcs, the bang-bang structure, and other technical lemmas. 
\autoref{sec:numerics} presents numerical experiments confirming our theoretical results, and \autoref{sec:conclusion} concludes with possible extensions.

\noindent {\it Notation.} We write $\R_{\ge0}=[0,\infty)$ and $\|\cdot\|_{\infty}$ for the essential-supremum norm.  
The space $L^\infty([0,T];\R^k_{\ge0})$ consists of (equivalence classes of) essentially bounded measurable maps $u:[0,T]\to\R^k_{\ge0}$.  
If $g:\R^n\to\R^m$, then $D_x g$ or $\bigl[\partial g_j/\partial x_i\bigr]_{j,i}$ denotes its $m\times n$ Jacobian; for any matrix $M$, $M_{ij}$ is its entry in row $i$, column $j$.  
Given a set $S$, $\bar S$ is its closure, $\inf S$ its infimum, and $\sup S$ its supremum.  
Finally, ``a.e.'' abbreviates ``almost everywhere.''

\section{General Epidemic Model Formulation}
\label{sec:modeling}

We consider a heterogeneous population partitioned into $K$ compartments (e.g., by spatial position or age), each one of size $n_i > 0$, for $\fori{i}$.
The epidemic evolves over a finite time horizon $[0, T]$ and, for each group $\fori{i}$, we define at time $t$:
\begin{itemize}
    \item the proportion of susceptible individuals: $s^{(i)}(t) \in \mathbb{R}$;
    \item the proportions of individuals in compartments of other stages of the disease (e.g. exposed, infectious): $x^{(i)}(t) = (x_1^{(i)}(t),\dots, x_d^{(i)}(t))^\top \in \mathbb{R}^d;$ and
    \item the proportion of vaccinated in group $i:$ $V^{(i)}(t).$
\end{itemize}
Set $\boldsymbol{s} = (s^{(1)}, \dots, s^{(K)})$, $\boldsymbol{x} = (x^{(1)} | \dots | x^{(K)}) \in \mathbb{R}^{d\times K}$ and the total vaccinated population $V(t) \coloneqq \sum_{i=1}^{K} n_i V^{(i)}(t)$.

The epidemic dynamics, subject to vaccination rates $u_i \colon [0,T] \to \mathbb{R}_{\ge 0}$, is governed by
\begin{equation}
    \label{eq:system_epidemic}
    \begin{aligned}
        \dot{s}^{(i)} &= -s^{(i)} f_i(\boldsymbol{x}) -  u_i s^{(i)}, \\
        \dot{x}^{(i)} &= s^{(i)} f_i(\boldsymbol{x}) e_1 + g(x^{(i)}), \\
        \dot{V} &= \sum_{i=1}^K u_i n_i s^{(i)},
    \end{aligned}
\end{equation}
where $e_1 =(1,0,\dots,0)^\top \in \mathbb{R}^d$ (meaning that people that exit the susceptible compartment $s^{(i)}$ enter in the group corresponding to the first component of $x^{(i)}$), and the functions $ f_i : \mathbb{R}^{d \times K} \to \mathbb{R}$ and $g : \mathbb{R}^{d} \to \mathbb{R}^d$ are twice continuously differentiable.
The function $g$ models within-group disease progression (for instance, passage from the exposed to infectious compartments), while $f_i$ defines the force of infection per susceptible in group $i$.
The nonlinear term $s^{(i)} f_i(\boldsymbol{x})$ captures the infection process due to temporary interregional interactions through which susceptible people from region $i$ become infected. 
Examples of models with this structure include classical epidemiological models such as SIR and SEIR with vaccination (both single-group and stratified versions), as well as more complex variants, discussed in~\cite{lemaitre2022optimal,nonato_robot_2022,aronna2024optimal}.

We assume that the effect of new births and deaths during the time interval of study is negligible, hence we suppose the total population is constant, a condition enforced by
\begin{equation}
    \label{eq:constant_population}
    \sum_{j=1}^d g_j(x) = 0, \text{ for all } x \in \R^d,
\end{equation}
where $g_j$ refers to the $j$th component of $g$.
Additionally, $g(0) = 0$ guarantees that the disease-free equilibrium is invariant.
For positivity, we assume $f_i(\boldsymbol{x}) \geq 0$ if $\boldsymbol{x} \ge 0$ and $g_i$ is {\it essentially non-negative}, this is, if $x\in \R^d$, $x \ge 0$, and $x_j = 0$, then $g_j(x) \geq 0$.

\begin{remark}\label{rem:qj}
    Propositions \ref{prop:positive_invariance_control}, \ref{prop:existence_model} and \ref{prop:nondegeneracy} remain valid if the susceptible dynamics include any Lipschitz-continuous migration term \(q_i(\mathbf{s})\) to the $s^{(i)}$-dynamics.
    The bang--bang proof, however, relies on monotonicity, so extending Theorems \ref{th:singular_arcs} and \ref{th:optimal_vaccination} to include $q_i$ is left for future work.
    Numerical tests with linear $q_i$ (see \autoref{fig:q_j_nonzero}) also show one switch per week, suggesting the qualitative behavior is robust.
\end{remark}

\begin{proposition}\label{prop:positive_invariance_control}
    Under the standing assumptions, the system \eqref{eq:system_epidemic}, with initial data $s^{(i)}(0)>0$, $x^{(i)}_j(0)\ge 0$, $V^{(i)}(0)\ge0,$ for all $\fori{i}$ and $j \in \{1,\dots,d\}$, admits a unique bounded, absolutely continuous solution for each $u\in L^{\infty}([0,T];\mathbb{R}^K_{\ge0})$ that remains in $\mathbb{R}^{K(d+1)+1}_{\ge 0}$.
    Moreover, $s^{(i)}(t) \ge l_i > 0,$ for all $t \in [0,T]$, $i\in \{1,\dots,K\}$.
\end{proposition}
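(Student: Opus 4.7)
My plan is to establish, in order: (i) local existence and uniqueness, (ii) forward invariance of the non-negative orthant, (iii) \emph{a priori} bounds that upgrade local to global existence on $[0,T]$, and (iv) the strict positive lower bound on $s^{(i)}$. Each step uses the previous.

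\emph{Local existence and uniqueness.} The right-hand side of \eqref{eq:system_epidemic} is $C^2$ in the state $(\bb{s},\bb{x},V)$ (since $f_i$ and $g$ are $C^2$), measurable in $t$, and, for fixed $u\in L^\infty$, essentially bounded. Carath\'eodory's theorem thus yields a unique absolutely continuous solution on a maximal interval $[0,t_{\max})\subseteq[0,T]$.

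\emph{Positive invariance.} The $s^{(i)}$-equation is linear in $s^{(i)}$ with measurable essentially bounded coefficient, so Gr\"onwall's formula gives the explicit representation $s^{(i)}(t)=s^{(i)}(0)\exp\!\bigl(-\int_0^t(f_i(\bb{x}(\tau))+u_i(\tau))\,d\tau\bigr)>0$ on $[0,t_{\max})$. For $\bb{x}$, the essential non-negativity of $g$, together with $f_i\ge 0$ on $\R^{d\times K}_{\ge 0}$ and $s^{(i)}\ge 0$, is exactly Nagumo's tangential condition for the non-negative orthant, so each $x_j^{(i)}(t)\ge 0$. Finally, $\dot V=\sum_i u_i n_i s^{(i)}\ge 0$ keeps $V(t)\ge V(0)\ge 0$.

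\emph{Boundedness and global existence.} Summing the $x^{(i)}$-equations using \eqref{eq:constant_population} and adding the $s^{(i)}$-equation yields $\frac{d}{dt}\bigl(s^{(i)}+\sum_j x_j^{(i)}\bigr)=-u_i s^{(i)}\le 0$, so each group total is non-increasing. Combined with non-negativity, this bounds every $s^{(i)}$ and $x_j^{(i)}$ uniformly by the initial group total; a direct integration then bounds $V$. The solution therefore remains in a compact subset of $\R^{K(d+1)+1}_{\ge 0}$, so $t_{\max}=T$. Since $\bb{x}(t)$ lies in a compact set on $[0,T]$ and $f_i$ is continuous, $M_i:=\sup_{t\in[0,T]}f_i(\bb{x}(t))<\infty$, and plugging into the explicit formula gives $s^{(i)}(t)\ge s^{(i)}(0)\exp\!\bigl(-T(M_i+\|u_i\|_\infty)\bigr)=:l_i>0$.

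\emph{Main obstacle.} The delicate step is justifying Nagumo's tangential invariance when the vector field is only measurable in $t$ (because $u\in L^\infty$), so pointwise right-derivative arguments on the boundary are not immediately available. The clean remedy is a standard approximation: perturb the $\bb{x}$-dynamics by $+\varepsilon\mathbf{1}$ to obtain a strictly inward-pointing field on the boundary, so perturbed solutions $\bb{x}^\varepsilon$ stay strictly positive; then pass to the limit $\varepsilon\downarrow 0$ using continuous dependence of Carath\'eodory solutions on the data.
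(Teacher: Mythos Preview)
Your proof is correct and follows essentially the same route as the paper's: Carath\'eodory for existence/uniqueness, essential non-negativity (the paper packages this as a separate positivity lemma, which is exactly your Nagumo step) for forward invariance, the conserved-population identity for boundedness, and Gr\"onwall for the strict lower bound on $s^{(i)}$. Your treatment is in fact more careful in two places---you extract the explicit exponential formula for $s^{(i)}$ rather than invoking the general invariance lemma there, and you flag (and resolve via $\varepsilon$-perturbation) the Carath\'eodory subtlety in the tangential-cone argument that the paper's lemma glosses over.
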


\begin{proof}\label{proof:positive_invariance_control}
    For a given $u \in L^\infty([0,T]; \R^m_{\ge 0})$, define the state vector
    \(
    w(t) \coloneqq (\boldsymbol{s}(t), \boldsymbol{x}(t), V(t))
    \)
    so that system~\eqref{eq:system_epidemic} can be written in compact form as
    \[
    w'(t) = F(t, w(t)), \quad w(0) = w_0.
    \]
    The existence and uniqueness of a solution for the latter system follow from Carath\'eodory's theorem~\cite{coddington1955theory} since one has:
    (i) for each fixed $w$, $F(\cdot, w)$ is measurable, 
    (ii) for each fixed $t$, $F(t, \cdot)$ is continuously differentiable (hence locally Lipschitz), and 
    (iii) $F(\cdot, w)$ is locally integrable due to the essential boundedness of $u$. 
    To ensure non-negativity, \autoref{lemma:positivity} implies that the non-negative orthant $\mathbb{R}^{K(d+1)+1}_{\geq 0}$ is forward invariant. 
    Finally, since $q_i$ is essentially non-negative, $f_i$ and $u_i$ are bounded on $[0,T]$, there exists a constant $C_i>0$ such that $\dot{s}^{(i)} \ge -C_i s^{(i)}$.
    By Gronwall's inequality, it follows that $s^{(i)}(t)$ is bounded from below by $l_i \coloneqq s^{(i)}(0)e^{-C_i T} > 0$.
    Additionally, $w$ is bounded since the population is constant over time.
\end{proof}

Healthcare limitations for each group $\fori{i}$, leads to the {\it mixed control-state constraint}
\begin{equation}
    \label{eq:mixed_constraints}
    0 \le u_i(t) s^{(i)}(t) \le v_i^{\max},
\end{equation}
where $v_i^{\max}$ is the maximum vaccination rate in group $i$. 
Since $s^{(i)}(t) \geq l_i > 0$ for each $i$, by \autoref{prop:positive_invariance_control},
\begin{equation}
    \label{eq:compactness_u}
    u_i(t) \leq v_i^{\max}/l_i,
\end{equation}
which ensures compactness and convexity of the admissible control set.
Vaccines arrive in weekly shipments from a centralized authority, with maximum weekly supply $V_w^{\max}$ for each $w=0,\dots, \frac{T}{7}-1$ (assuming $T$ is a multiple of 7), giving the cumulative shipment function
\[
D(t) \coloneqq \sum_{n=0}^{w}V_n^{\max}, \quad \text{for }\, t\in [7w,7(w+1)).
\]
We approximate $D$ by a twice continuously differentiable function $D_{\epsilon}$ with parameter $\epsilon>0$ satisfying
\[
D_{\epsilon}(t)=D(t),\quad \text{for }\, 7w+\epsilon/2\le t\le 7(w+1)-\epsilon/2.
\]
We then enforce the following {\it pure-state constraint} 
\begin{equation}
    \label{eq:state_constraints}
    V(t) \leq D_{\epsilon}(t), \quad \text{for }\,  t \in [0,T],
\end{equation}
allowing unused vaccines from a week to be allocated later.

The cost functional accounts for hospitalization and vaccination costs in the following way
\begin{equation}
    \label{eq:cost_function}
    J[u] \coloneqq c_v V(T) + \sum_{i=1}^{K} \int_{0}^{T} n_i f_0\bigl(x^{(i)}(t)\bigr)\,dt,
\end{equation}
where $c_v > 0$ and $f_0: \mathbb{R}^d \to \mathbb{R}_{\ge0}$.
Here, $f_0$ maps the disease-stage vector to a per-individual health cost.

The resulting {\bf optimal control problem} is
\begin{equation}
    \label{eq:optimal_control_problem}
    \begin{aligned}
        \min_{u} \quad & J[u], \quad \text{s.t.} \quad \eqref{eq:system_epidemic}, \eqref{eq:mixed_constraints}, \eqref{eq:state_constraints}, u_i(t) \geq 0.
    \end{aligned}
\end{equation}

We look for $u$ to minimize the finite-horizon cost $J[u]$ on $[0,T]$ with no terminal constraint, so our focus is on the open-loop solution of the optimization problem \eqref{eq:optimal_control_problem} rather than on the closed-loop optimizer or stabilizer.  
Although the uncontrolled system admits a disease-free equilibrium (and may admit endemic equilibria depending on $g$), their presence does not affect the optimization.

\begin{proposition}
    \label{prop:existence_model}
    The optimal control problem \eqref{eq:optimal_control_problem} admits a global minimum.
\end{proposition}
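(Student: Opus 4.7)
The plan is to apply the direct method: extract a minimizing sequence, pass to weak-$*$ limits for the controls and uniform limits for the states, and verify that both the constraints and the cost survive the passage to the limit.

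First, let $(u_n)_{n\ge1}\subset\mathcal{U}$ be a minimizing sequence for $J$, where $\mathcal U$ denotes the admissible set defined by \eqref{eq:mixed_constraints}--\eqref{eq:state_constraints} together with $u_i\ge0$. By \autoref{prop:positive_invariance_control} and the pointwise bound \eqref{eq:compactness_u}, each $u_n$ lies in the closed, bounded, convex set $\{u\in L^\infty([0,T];\R^K):\,0\le u_i(t)\le v_i^{\max}/l_i\}$, which is weak-$*$ compact by the Banach--Alaoglu theorem. Passing to a subsequence (not relabelled), we obtain $u_n\stackrel{*}{\rightharpoonup} u^*$ in $L^\infty([0,T];\R^K)$, with $0\le u_i^*(t)\le v_i^{\max}/l_i$ a.e.

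Second, let $w_n=(\boldsymbol s_n,\boldsymbol x_n,V_n)$ be the trajectory generated by $u_n$. \autoref{prop:positive_invariance_control} yields uniform bounds on $w_n$ (constant population), and the right-hand side of \eqref{eq:system_epidemic} is then uniformly bounded along the sequence, so $(w_n)$ is equi-Lipschitz. By Arzel\`a--Ascoli, up to a subsequence, $w_n\to w^*$ uniformly on $[0,T]$. To identify $w^*$ as the trajectory driven by $u^*$, rewrite \eqref{eq:system_epidemic} in integral form; the only delicate term is the bilinear product $u_i(t)s^{(i)}(t)$. For any $\phi\in L^1([0,T])$, the uniform convergence $s_n^{(i)}\to s^{*(i)}$ gives $s_n^{(i)}\phi\to s^{*(i)}\phi$ in $L^1$, and combined with $u_n\stackrel{*}{\rightharpoonup} u^*$ one concludes $u_n^{(i)}s_n^{(i)}\stackrel{*}{\rightharpoonup} u_i^* s^{*(i)}$. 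The nonlinear term $s^{(i)}f_i(\boldsymbol x)$ and $g(x^{(i)})$ pass to the limit by continuity. Thus $w^*$ solves \eqref{eq:system_epidemic} with control $u^*$.

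Third, we verify admissibility in the limit. For the mixed constraint, the weak-$*$ convergence $u_n^{(i)}s_n^{(i)}\stackrel{*}{\rightharpoonup} u_i^* s^{*(i)}$ combined with the pointwise bound $0\le u_n^{(i)}(t)s_n^{(i)}(t)\le v_i^{\max}$ and the weak-$*$ closedness of the order interval $[0,v_i^{\max}]$ in $L^\infty$ yields $0\le u_i^*(t)s^{*(i)}(t)\le v_i^{\max}$ a.e., so \eqref{eq:mixed_constraints} holds for $u^*$. For the pure-state constraint \eqref{eq:state_constraints}, uniform convergence $V_n\to V^*$ and $V_n(t)\le D_\epsilon(t)$ for every $t$ pass directly to the limit. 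Hence $u^*\in\mathcal U$.

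Finally, continuity of $f_0$ and uniform convergence of $\boldsymbol x_n$ and of $V_n$ imply $\int_0^T n_i f_0(x_n^{(i)})\,dt\to\int_0^T n_i f_0(x^{*(i)})\,dt$ and $c_v V_n(T)\to c_v V^*(T)$, so $J[u^*]=\lim_n J[u_n]=\inf_{\mathcal U}J$. The main technical obstacle is the bilinear coupling $u_i s^{(i)}$ in both the dynamics and the mixed constraint; the argument above handles it via the weak-$*$/uniform compatibility enabled by the strict positivity of $s^{(i)}$ granted by \autoref{prop:positive_invariance_control}.
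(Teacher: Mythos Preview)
Your proof is correct and follows a genuinely different route from the paper's. The paper invokes Cesari's existence theorem \cite[Th.~1]{cesari1965existence} as a black box and verifies its hypotheses: compactness of the state-constraint set $\Omega$, compactness and upper semi-continuity of the state-dependent control set $U(t,w)$, the Carath\'eodory property of the dynamics, and convexity of the augmented velocity sets $\tilde Q(t,w)$ (which follows because both the dynamics and the running cost are affine in $u$). You instead carry out the direct method by hand: weak-$*$ compactness for the controls (Banach--Alaoglu, with sequential extraction justified by separability of $L^1([0,T])$), Arzel\`a--Ascoli for the trajectories, and an explicit limit passage in the integral form of \eqref{eq:system_epidemic}. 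Both arguments ultimately rest on the same structural fact---control-affinity of the dynamics and the cost---but exploit it differently: in Cesari's framework it yields convexity of $\tilde Q(t,w)$, while in your argument it is exactly what lets the product $u_n^{(i)} s_n^{(i)}$ survive the weak-$*$/uniform limit. Your route is self-contained and makes the mechanism explicit; the paper's route is shorter but offloads the compactness/lower-semicontinuity work to an external theorem. One small remark: your closing sentence attributes the handling of the bilinear term to the strict positivity of $s^{(i)}$, but that positivity is really used upstream, in your second step, to obtain the uniform $L^\infty$ bound on $u_n$ via \eqref{eq:compactness_u}; the bilinear limit passage itself only needs uniform convergence of $s_n^{(i)}$ together with uniform boundedness of $u_n^{(i)}$.
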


\begin{proof}
    We check directly that all the assumptions of Cesari's existence theorem
    \cite[Th.~1]{cesari1965existence} are fulfilled by problem
    \eqref{eq:optimal_control_problem}.
    Let $w(t) \coloneqq (\boldsymbol{s}(t), \boldsymbol{x}(t), V(t))$.

    First, define the pure-state constraint
    \(g(t,w)\coloneqq V-D_\varepsilon(t)\) and the set
    \[
    \Omega\;=\;\{(t,w)\in[0,T]\times\R_{\ge0}^{K(d+1)+1}\;:\;g(t,w)\le0\}.
    \]
    Because \(D_\varepsilon\) is continuous and the state space is bounded by the population balance, \(\Omega\) is compact.  
    By \autoref{prop:positive_invariance_control} every admissible trajectory of \eqref{eq:system_epidemic} with \(V\le D_\varepsilon\) stays in~\(\Omega\).

    For \((t,w)\in\Omega\), set
    \[
    U(t,w)\;=\;\Bigl\{u\in\R_{\ge0}^{K}\;:\;m(w,u)\le0\Bigr\},
    \]
    with $m(w,u)\;=\;\bigl(u_i\,s^{(i)}-v_i^{\max}\bigr)_{i=1}^{K}$.
    Since \(s^{(i)}(t)\ge l_i>0\), each component of \(u\) is bounded by
    \(v_i^{\max}/l_i\); hence \(U(t,w)\) is a closed, bounded and convex box, and the continuity of \(m\) implies upper semi-continuity of the map \((t,w)\mapsto U(t,w)\).

    The \(C^2\)-regularity of \(f_i\) and \(g\) makes \(F\) a Carathéodory mapping, locally Lipschitz in \(w\) and continuous in \((w,u)\).
    The running cost is \(\ell(w)=\sum_{i=1}^{K} n_i f_0(x^{(i)})\),
    and the Bolza terminal term \(c_v V(T)\) is converted into the Lagrange form \(\psi(w,u)=c_v\sum_{i=1}^{K} n_i u_i s^{(i)}\).
    Both \(\ell\) and \(\psi\) are continuous and affine in \(u\).

    For Cesari's convexity requirement introduce
    {\small \begin{align}
    \tilde Q(t,w)=\Bigl\{(z^0,z)\in\R\times\R^{K(d+1)+1}\Bigm|\;
    & z^0\ge \ell(w)+\psi(w,u),\nonumber\\
     z = F(w,u),\; &u\in U(t,w)\Bigr\}.
    \end{align}}
    Because \(u\mapsto(\ell(w)+\psi(w,u),F(w,u))\) is affine and \(U(t,w)\) is convex, each \(\tilde Q(t,w)\) is convex in \((z^0,z)\).

    All the hypotheses of \cite[Th.~1]{cesari1965existence} are therefore verified: the state set is compact and forward invariant, \(F\) is Carathéodory, the admissible control map is compact, convex and upper semi-continuous, the integrand is continuous and convex in \(u\), and the sets \(\tilde Q(t,w)\) are convex.
    Consequently, an optimal admissible pair \((w^\ast,u^\ast)\) exists for problem~\eqref{eq:optimal_control_problem}.
\end{proof}

\section{Necessary Conditions for Optimality}
\label{sec:optimality}

We derive necessary optimality conditions using Pontryagin's Maximum Principle for problems with state, mixed, and end-point constraints.
Under our standing assumptions, \cite[Theorem~3.1]{arutyunov2011maximum} guarantees that an optimal process $(\boldsymbol{s}^*, \boldsymbol{x}^*, V^*, u^*)$ satisfies Pontryagin's Principle, provided that the endpoint, state, and mixed constraints satisfy appropriate regularity conditions; moreover, compatibility between the state and endpoint constraints is ensured by \(V(0)=0\le D_{\epsilon}(0)\) and by explicitly imposing \(V(T)\le D_{\epsilon}(T)\). 
For brevity, we omit the star in the optimal process throughout the paper.

Define the Hamiltonian function:
\begin{equation}
    \begin{aligned}
    H &\coloneqq -\sum_{i=1}^K \psi^s_i \left(s^{(i)}\left(f_i(\boldsymbol{x}) + u_i\right) \right) \\
    &\quad +\sum_{i=1}^K \langle \psi^{x^{(i)}}, s^{(i)} f_i(\boldsymbol{x}) e_1 + g(x^{(i)}) \rangle + \psi^V \sum_{i=1}^K u_i n_i s^{(i)} \\
    &\quad - \lambda \left(\sum_{i=1}^K u_i n_i s^{(i)} - D_{\epsilon}'(t)\right) - \lambda_0 \sum_{i=1}^K n_i f_0(x^{(i)}).
    \end{aligned}
\end{equation}
The maximum principle states that there exist absolutely continuous adjoint variables $\psi_i^s:[0, T]\to\mathbb{R}$, $\psi^{x^{(i)}}: [0, T]\to\mathbb{R}^d$, for $\fori{i}$, $\psi^V: [0 ,T]\to\mathbb{R},$ together with a function $\lambda:[ 0,T] \to \mathbb{R}$, a scalar $\lambda_0 \geq 0$, and a measurable, essentially bounded function $r: [0, T] \to \R^{2K}$, such that they verify the following {\it non-triviality condition}
\begin{equation}
    \lambda_0 + \|\psi^s\|_{\infty} + \sum_{i=1}^{K} \|\psi^{x^{(i)}}\|_{\infty} + \|\psi^V\|_{\infty} + \|\lambda\|_{\infty} > 0, 
    \label{eq:nondegeneracy}
\end{equation}
solve the following system of ordinary differential equations
\begin{equation}
    \begin{aligned}
        \dot\psi_i^s &= (\psi^s_i - \psi^{x^{(i)}}_1) f_i(\boldsymbol{x})
        - ((\psi^V - \lambda) n_i - \psi_i^s) u_i + r_i u_i \\
        \dot\psi^{x^{(i)}} &= \sum_{j=1}^K (\psi_j^s - \psi_1^{x^{(j)}}) s^{(j)} \left(\frac{\partial f_j(\boldsymbol{x})}{\partial x^{(i)}} \right)^{\top}
        \\ 
            &\quad - \psi^{x^{(i)}} D_x g(x^{(i)}) + \lambda_0 n_i \nabla f_0\left(x^{(i)}\right)^{\top}, \\
        \dot\psi^V &= 0,
    \end{aligned}
    \label{eq:adjoint_system}
\end{equation}
with endpoint conditions
\begin{equation}
    \psi^s_i(T) = \psi^{x^{(i)}}_j(T) = 0, \quad \psi^V(T) = -\lambda_0 c_v,
\end{equation}
for $i\in\{1,\dots,K\}$ and $j\in\{1,\dots,d\}$.

Moreover, for each region $\fori{j}$, the optimality conditions imply that
\begin{equation}
    \label{eq:optimality_condition}
    ((\psi^V(t) -\! \lambda(t))n_j -\! \psi_j^s(t)) s^{(j)}(t) = r_j(t) s^{(j)}(t) -\! r_{j+K}(t).
\end{equation}
with $r_j(t) \ge 0$, $r_{j+K}(t) \ge 0$, and the complementarity conditions
\begin{equation}
    \label{eq:complementarity}
    r_j(t) \left(u_j(t) s^{(j)}(t) - v_j^{\max} \right) = r_{j+K}(t) u_j(t) = 0
\end{equation}
holding almost everywhere in $[0,T]$.

The function $\lambda$ satisfies the following properties:
\begin{enumerate}
    \item[(i)] $\lambda$ is constant on any interval $[s_1, s_2]$ where $V(s) < D_{\epsilon}(s)$ for all $s_1 \leq s \leq s_2$;
    \item[(ii)] $\lambda$ is left-continuous on $(0,T)$ and satisfies $\lambda(T) = 0;$
    \item[(iii)] $\lambda$ is non-increasing over $[0,T]$, ensuring that $\lambda \geq 0$.
\end{enumerate}

\section{Main Results}
\label{sec:main_results}

Based on the necessary conditions established in the previous section, we now characterize the {\it bang--bang} structure of optimal vaccination and provide some properties of the adjoint variables.
We also investigate the qualitative behavior of the optimal vaccination strategy.

Consider an optimal process $(\boldsymbol{s}, \boldsymbol{x}, V, u)$.
The adjoint equations~\eqref{eq:adjoint_system} imply that $\psi^V(t) = -\lambda_0 c_v$ for all $t \in [0,T]$.
Moreover, multiplying \eqref{eq:optimality_condition} by $u_j$ and dividing by $s^{(j)} > 0$ yields
$
((\psi^V - \lambda) n_j - \psi_j^s) u_j - r_j u_j = 0,
$
which simplifies the adjoint equation for $\psi_i^s$ to
\[
\dot\psi_i^s = (\psi^s_i - \psi^{x^{(i)}}_1) f_i(\boldsymbol{x}). 
\]
Additionally, due to \autoref{prop:nondegeneracy} below, we can fix $\lambda_0 = 1$.

\begin{proposition}[Nondegeneracy]
    \label{prop:nondegeneracy}
    In any optimal process, we must have $\lambda_0 > 0$.
\end{proposition}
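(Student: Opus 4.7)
The plan is to argue by contradiction: assume $\lambda_0=0$ and show every multiplier must vanish, contradicting the non-triviality condition \eqref{eq:nondegeneracy}. Since $\dot\psi^V=0$ and $\psi^V(T)=-\lambda_0 c_v=0$, one immediately has $\psi^V\equiv 0$. As in the paragraph preceding the statement, multiplying \eqref{eq:optimality_condition} by $u_j/s^{(j)}$ cancels the $r_i u_i$ term in the adjoint equation for $\psi_i^s$, reducing it to $\dot\psi_i^s = (\psi_i^s - \psi_1^{x^{(i)}}) f_i(\boldsymbol{x})$. With $\lambda_0=0$, the equation for $\psi^{x^{(i)}}$ loses its forcing term $\lambda_0 n_i \nabla f_0(x^{(i)})^\top$, so the pair $(\psi^s, \psi^x)$ satisfies a homogeneous linear ODE system with zero terminal data, and backward uniqueness then gives $\psi^s \equiv 0$ and $\psi^x \equiv 0$ on $[0,T]$.

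Condition \eqref{eq:nondegeneracy} now forces $\lambda\not\equiv 0$, and since $\lambda\ge 0$ is non-increasing with $\lambda(T)=0$, monotonicity yields $\lambda(0)>0$. On any subset where $\lambda>0$, the optimality condition \eqref{eq:optimality_condition} collapses to $-\lambda n_j s^{(j)} = r_j s^{(j)} - r_{j+K}$, giving $r_{j+K}=(\lambda n_j + r_j)s^{(j)}>0$; then complementarity \eqref{eq:complementarity} forces $u_j\equiv 0$ a.e.\ there, for every $j$. Setting $t^* \coloneqq \sup\{t\in[0,T]:\lambda(t)>0\}$, monotonicity shows $\lambda>0$ on $[0,t^*)$, hence $u\equiv 0$ a.e.\ on $[0,t^*)$, so $V\equiv 0$ on $[0,t^*)$ and by continuity also $V(t^*)=0$.

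The contradiction is then produced by property~(i). Under the natural assumption that the shipment schedule satisfies $D_\epsilon(t^*)>0$, continuity of $V$ and $D_\epsilon$ yields $V<D_\epsilon$ on $[0,t^*+\eta]\cap[0,T]$ for some $\eta>0$, so property~(i) forces $\lambda$ to be constant equal to $\lambda(0)>0$ on that interval. This contradicts the maximality of $t^*$ when $t^*<T$, and contradicts the endpoint condition $\lambda(T)=0$ when $t^*=T$. The main obstacle is correctly invoking property~(i) on an interval crossing $t^*$: one needs to reconcile the left-continuity and monotonicity of $\lambda$ with the structural support of the underlying state-constraint measure in order to propagate the positivity of $\lambda$ strictly past $t^*$, together with a minor technicality to verify that $D_\epsilon$ is positive at the relevant time.
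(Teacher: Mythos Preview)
Your proof is correct and follows essentially the same route as the paper: assume $\lambda_0=0$, deduce that the adjoint system is homogeneous with zero terminal data so $\psi^s,\psi^x,\psi^V\equiv0$, then use non-triviality to force $\lambda\not\equiv0$, show $u\equiv0$ where $\lambda>0$, and finally invoke property~(i) on a slightly larger interval to contradict the definition of $t^*$. The only cosmetic difference is how you extract $u_j=0$ from the optimality condition (you read off $r_{j+K}>0$ directly and use complementarity, whereas the paper multiplies through by $u_j$); both are equivalent. Your self-flagged ``main obstacle'' is not a genuine gap: the paper handles that step exactly as you do, tacitly using $D_\epsilon>0$ on the relevant interval so that $V=0<D_\epsilon$ there, and property~(i) then applies verbatim.
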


\begin{proof}
    Suppose, by way of contradiction, that $\lambda_0 = 0$. 
    Then the adjoint system becomes a homogeneous linear ODE $\dot\Psi(t)=A(t)\,\Psi(t)$ with 
    $\Psi(T)=0$, where \(\Psi=(\psi^s,\psi^x,\psi^V)^\top\) and \(A(t)\) is continuous.
    By uniqueness of solution of system~\eqref{eq:adjoint_system}, it follows that $\Psi \equiv 0$, which implies that $\psi^s_i = \psi^{x^{(i)}}_j = \psi^V = 0$ for all $i$ and $j$. 
    However, the nondegeneracy condition~\eqref{eq:nondegeneracy} then implies that $\|\lambda\|_{\infty} > 0$. 
    Due to the properties of $\lambda$, there exists a maximal interval $[0,t^*]$ with $t^* \in (0,T)$ such that $\lambda(t) > 0$ for $t \in [0,t^*]$ and $\lambda(t)=0$ for $t \in (t^*,T]$. 
    Moreover, the optimality condition~\eqref{eq:optimality_condition} simplifies to
    \[
    -\lambda(t)\,n_j\,u_j(t) = r_j(t)\,u_j(t),\quad \text{for a.e. } t\in[0,T].
    \]
    If $u_j(t)>0$ for some $j$, then we must have $\lambda(t)=r_j(t)=0$. 
    Thus, for almost every $t \in [0,t^*]$, it follows that $u_j(t)=0$ for all $j$. 
    This implies that, over $[0,t^*]$, we have $V(t)=0$ (and hence $V(t) < D_\epsilon(t)$), so that $\lambda$ is constant over this interval. 
    In other words,
    \[
    \lambda(t)=
        \begin{cases}
        a, & t\le t^*,\\[1mm]
        0, & t>t^*,
        \end{cases}
    \]
    for some constant $a>0$. 
    By the continuity of $V$, there exists $\delta>0$ such that $V(t)<D_\epsilon(t)$ for all \(t\in[0,t^*+\delta]\). 
    In particular, this implies that $\lambda$ must remain constant over $[0,t^*+\delta]$, yielding $a=0$, which contradicts the standing hypotheses.
    Therefore, we conclude that $\lambda_0>0$.
\end{proof}

Intuitively, $\lvert\psi^s_i\rvert<\psi^{x^{(i)}}_1$ since, under the shadow-price interpretation~\cite{behncke2000optimal}, infection incurs a higher cost than susceptibility.
\autoref{lemma:shadow_price} proves this fact rigorously under the following simplifying assumptions.

\begin{assumption}
    \label{assumption:linearity_assumption}
    \begin{enumerate}
        \item[(a)]  For each $\fori{j}$, the functions $f_j$ are linear functions of $\boldsymbol{x}$;
        In particular, setting  $\beta_{ij} \coloneqq \nabla_{x^{(i)}} f_j(\boldsymbol{x})$, we assume that $\beta_{ij} \ge 0$ for each $i,j$, meaning that non-susceptibles from region $i$ do not negatively affect the transmission rate.
        \item[(b)] The function $f_0$ is linear, with $c \coloneqq \nabla f_0(x) \ge 0$;
        \item[(c)] The function $g$ is linear.
        Since $g$ is essentially negative, its Jacobian $G \coloneqq D_x g$ is Metzler;
        \item[(d)] The state-dependent cost is nontrivial. 
        In particular, if we define
        \[
        \begin{split}
            I &\coloneqq \Bigl\{ j \in \{2,\dots, d\} \;\Bigm|\; \exists\, m \ge1, (i_k)_{k=0,\dots,m} \text{ s. t. } \\ 
            &\quad\quad i_0=i, i_m=j,  \text{ and } G_{i_{k+1},i_k} > 0, \text{ for all } k\Bigr\}.
        \end{split}
        \]
        then we assume that \(c_j > 0\) for some \(j\in I\).
    \end{enumerate}
\end{assumption}

\begin{remark}
    \label{remark:assumption}
    Item  (d) in~\autoref{assumption:linearity_assumption} is imposed solely to exclude the trivial case where the optimal control $u_i\equiv 0$ for all $i$.
    If $c_j = 0$ for every $j \in I$, then the state variable corresponding to the first component of $x^{(i)}$ would not affect the cost, not even indirectly, yielding $u_i \equiv 0$ for all $i$. 
    In other words, vaccination must affect at least one cost-contributing state; otherwise, the optimal control is the trivial one.
\end{remark}

\begin{lemma}
    \label{lemma:shadow_price}
    Under \autoref{assumption:linearity_assumption} and the condition that $f_i(\boldsymbol{x}(t)) > 0$ for all $t \in [0,T]$, the adjoint variables of any optimal process satisfy, $\fori{i}$, and $t \in [0, T)$, 
    \[
    \psi^{x^{(i)}}_1(t) < \psi_i^s(t) <0 ,\quad \dot{\psi}_i^s(t) > 0,\quad \dot{\psi}_1^{x^{(i)}}(t) \ge 0.
    \]
\end{lemma}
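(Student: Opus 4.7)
The plan is to work in backward time $\tau = T - t$ with auxiliary variables $\xi_i := \psi_i^s - \psi_1^{x^{(i)}}$ and $\eta^{(i)}_k := -\psi_k^{x^{(i)}}$, both vanishing at $\tau = 0$. Using the reduced equation $\dot{\psi}_i^s = \xi_i f_i$ from the preamble together with $f_i > 0$, proving the strict inequality $\xi_i(t) > 0$ on $[0, T)$ immediately gives $\dot{\psi}_i^s > 0$, then $\psi_i^s(t) = -\int_t^T \xi_i f_i\,d\sigma < 0$, and $\psi_1^{x^{(i)}} < \psi_i^s$ by definition. The remaining claim $\dot{\psi}_1^{x^{(i)}} \ge 0$ coincides with the non-negativity of the forcing $G_i$ appearing in the backward-time equation $\xi_i' + f_i \xi_i = G_i$, so it will fall out of the same argument.

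Under \autoref{assumption:linearity_assumption}, $(\xi, \eta)$ solves a linear time-varying ODE in $\tau$ with zero initial data and non-negative constant forcing $n_i c$. The structural ingredients are $G$ Metzler (so $e^{G^\top s}$ is entrywise non-negative for $s \ge 0$), $\beta_{ij} \ge 0$, and the column-sum identity $\sum_l G_{l1} = 0$ from \eqref{eq:constant_population}. I would first establish non-negativity of both $\xi$ and $\eta$ by a cooperative bootstrap: the $\eta^{(i)}$-subsystem recovered via variation of constants from $\xi \ge 0$, and $\xi_i \ge 0$ obtained from $\xi_i(\tau) = \int_0^\tau e^{-\int_s^\tau f_i}\,G_i(s)\,ds$ whenever $G_i \ge 0$; a Picard iteration starting from $\xi \equiv 0 \equiv \eta$ closes the loop.

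Strict positivity of $\xi_i$ uses \autoref{assumption:linearity_assumption}(d). The forcing $n_i c_j > 0$ for the specified $j \in I$ makes $\eta^{(i)}_j(\tau) > 0$ on $(0, T]$ by a scalar Gronwall estimate. Along the primal path $1 = i_0 \to i_1 \to \cdots \to i_m = j$ with $G_{i_{k+1},i_k} > 0$, the coupling term $G_{i_{k+1},i_k}\,\eta^{(i)}_{i_{k+1}}$ in $(\eta^{(i)}_{i_k})'$ propagates strict positivity backward along the chain by induction on $k$, reaching $\eta^{(i)}_1(\tau) > 0$ for $\tau > 0$. Rewriting via conservation,
\[
G_i = \sum_j \xi_j s^{(j)}(\beta_{ij})_1 + \sum_{l \ne 1}(\eta^{(i)}_l - \eta^{(i)}_1)\,G_{l1} + n_i c_1,
\]
and combining with an auxiliary shadow-price monotonicity $\eta^{(i)}_l \ge \eta^{(i)}_1$ for $l$ with $G_{l1} > 0$ (``upstream cost does not exceed downstream cost''), one obtains $G_i(\tau) > 0$ on $(0, T]$, delivering both $\xi_i > 0$ and $\dot{\psi}_1^{x^{(i)}}(t) = G_i(T-t) \ge 0$.

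The main obstacle is this auxiliary monotonicity, needed to offset the indefinite-sign contribution $\eta^{(i)}_1 G_{11}$ that arises after applying conservation. Although intuitively clear from the shadow-price interpretation, it requires its own ODE-comparison argument; the cleanest approach I see is to show that the differences $\zeta^{(i)}_l := \eta^{(i)}_l - \eta^{(i)}_1$ satisfy a cooperative linear ODE with non-negative forcing and zero initial data, simultaneously with the path-based induction on $\eta^{(i)}$ rather than as a separate step.
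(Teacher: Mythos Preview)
Your framework matches the paper's almost exactly: reverse time, set $\xi_i=\psi_i^s-\psi_1^{x^{(i)}}$ and $\eta^{(i)}=-\psi^{x^{(i)}}$, exploit the Metzler structure of $G$ and $\beta_{ij}\ge 0$ to get non-negativity of the coupled $(\xi,\eta)$ system, and then run a path induction along $1=i_0\to\cdots\to i_m=j$ with $c_j>0$ to obtain $\eta_1^{(i)}(\tau)>0$ for $\tau>0$. Up to this point the two arguments are interchangeable.

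The divergence, and the gap, is your last step. The auxiliary monotonicity $\eta_l^{(i)}\ge\eta_1^{(i)}$ for $l$ with $G_{l1}>0$ need not hold, so the decomposition you write for $G_i$ does not deliver $G_i>0$. A two-stage counterexample: $d=2$, $G=\begin{pmatrix}-a & b\\ a & -b\end{pmatrix}$ with $a,b>0$, $c=(c_1,0)$ with $c_1>0$, and $(\beta_{ij})_1=0$. Then $\zeta:=\eta_2-\eta_1$ solves $\dot\zeta=-(a+b)\zeta-c_1 n_i$ with $\zeta(0)=0$, so $\zeta(\tau)<0$ for all $\tau>0$; the ``upstream $\ge$ downstream'' inequality fails, and your proposed cooperative ODE for $\zeta$ does not exist (the forcing $-c_1 n_i$ is negative). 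In this example $G_i>0$ is still true, but your route to it is blocked.

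The paper sidesteps this entirely, and you already have the ingredient it uses. You have $\eta_1^{(i)}(\tau)>0$ for $\tau>0$ and $(\eta_1^{(i)})'=G_i$; hence $\int_0^\tau G_i(s)\,ds=\eta_1^{(i)}(\tau)>0$. Since also $G_i\ge 0$ (this is just $\varphi^{(i)}\ge 0$ from the cooperative bootstrap, no decomposition needed), the variation-of-constants formula gives
\[
\xi_i(\tau)=\int_0^\tau e^{-\int_s^\tau f_i}\,G_i(s)\,ds\;\ge\;e^{-\int_0^\tau f_i}\int_0^\tau G_i(s)\,ds\;>\;0,
\]
for every $\tau>0$. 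Equivalently, the paper argues by contradiction: if $\xi_i(\tau^*)=0$ for some $\tau^*>0$, then the same formula forces $G_i=0$ a.e.\ on $[0,\tau^*]$, whence $\eta_1^{(i)}(\tau^*)=0$, contradicting what you already proved. Either way, strict positivity of $\xi_i$ follows immediately from strict positivity of $\eta_1^{(i)}$, and the non-strict claim $\dot\psi_1^{x^{(i)}}\ge 0$ is simply $G_i\ge 0$. Drop the monotonicity detour.
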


\vspace{5pt}

\begin{proof}
    See the Appendix (page~\pageref{proof:shadow_price}).
\end{proof}

To determine when vaccination is active in each region, we introduce the {\it switching function}, which governs the activation of control inputs and is given by
\begin{equation}
    \label{eq:switching_function}
    \phi_j(t) \coloneqq (c_v + \lambda(t))n_j + \psi_j^s(t).
\end{equation}
From the optimality condition~\eqref{eq:optimality_condition}, the optimal control law satisfies the following conditions
\[
\begin{cases}
    \text{if }\phi_j(t) > 0, \text{ then } u_j(t) = 0; \\
    \text{if }\phi_j(t) < 0, \text{ then } u_j(t) s^{(j)}(t) = v_j^{\max}; \\
    \text{if }\phi_j(t) = 0, \text{ then } u_j(t) s^{(j)}(t) \in [0, v_j^{\max}].
\end{cases}
\]
Moreover, the function $\phi$ is left-continuous, which leads us to the following theorems that characterize the bang--bang structure of the optimal control with at most one switch per week.
Under the assumptions of \autoref{lemma:shadow_price}, we prove Theorems~\ref{th:singular_arcs} and~\ref{th:optimal_vaccination}.

\begin{theorem}
    \label{th:singular_arcs}
    For every $\epsilon > 0$ and \(w = 0, \dots, \frac{T}{7}-1\), the optimal controls contain no singular arcs. 
    More precisely,  there is no open interval \(\Lambda \subseteq W_{w,\epsilon} \coloneqq [7w+\epsilon, 7(w+1)-\epsilon]\) such that
    \[
    0 < u_i s^{(i)} < v_i^{\max}\quad \text{a.e. on }\, \Lambda.
    \]
\end{theorem}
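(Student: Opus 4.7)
The strategy is a proof by contradiction built on the switching function $\phi_i$ defined in \eqref{eq:switching_function}. Suppose there exist an index $i$, a week $w$, and an open interval $\Lambda \subseteq W_{w,\epsilon}$ on which $0 < u_i(t)\,s^{(i)}(t) < v_i^{\max}$ holds a.e. Since $u_i s^{(i)} < v_i^{\max}$ forces $r_i \equiv 0$ and $u_i > 0$ forces $r_{i+K} \equiv 0$ via the complementarity conditions \eqref{eq:complementarity}, the optimality condition \eqref{eq:optimality_condition}, combined with $\psi^V \equiv -c_v$ (from $\lambda_0 = 1$) and $s^{(i)} > 0$ (from \autoref{prop:positive_invariance_control}), collapses to the identity $\phi_i(t) = (c_v + \lambda(t))\,n_i + \psi_i^s(t) = 0$ a.e.\ on $\Lambda$.

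From here I would perform a dichotomy based on whether the pure-state constraint is active, writing $\Lambda^- = \Lambda \cap \{V < D_\epsilon\}$ (open, by continuity of $V$ and $D_\epsilon$) and $\Lambda^0 = \Lambda \cap \{V = D_\epsilon\}$. If $\Lambda^-$ is nonempty it contains an open subinterval $(a,b)$; on $(a,b)$ property (i) of $\lambda$ makes $\lambda$ constant, so $\phi_i$ is absolutely continuous there, the a.e.\ identity promotes to $\phi_i \equiv 0$ on $(a,b)$, and differentiating gives $\dot\phi_i = \dot\psi_i^s = 0$ a.e. But the simplified adjoint equation $\dot\psi_i^s = (\psi_i^s - \psi_1^{x^{(i)}})\,f_i(\boldsymbol{x})$, together with $\psi_1^{x^{(i)}} < \psi_i^s$ from \autoref{lemma:shadow_price} and the hypothesis $f_i > 0$, forces $\dot\psi_i^s > 0$, a contradiction. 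In the remaining case $\Lambda = \Lambda^0$, so $V \equiv D_\epsilon$ on $\Lambda$; since $\Lambda \subseteq W_{w,\epsilon}$ lies in a window where $D_\epsilon$ coincides with the piecewise-constant cumulative shipment $D$, one has $D_\epsilon' \equiv 0$, hence $\dot V \equiv 0$ on $\Lambda$, contradicting $\dot V \ge u_i\,n_i\,s^{(i)} > 0$.

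The main technical obstacle I expect is the bookkeeping between a.e.\ and pointwise statements: $\lambda$ is only of bounded variation and may carry jumps, so the identity $\phi_i = 0$ is available only almost everywhere at the start. What rescues the argument is the ``constancy of $\lambda$ off the active constraint'' property, which on any open subinterval of $\Lambda^-$ turns $\phi_i$ into an absolutely continuous function to which the fundamental theorem of calculus can be applied. Once this is in place, the proof reduces to the strict sign of $\dot\psi_i^s$ supplied by \autoref{lemma:shadow_price}, so the linearity hypotheses in \autoref{assumption:linearity_assumption} are the real source of the rigidity that rules out singular arcs.
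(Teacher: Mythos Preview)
Your proof is correct and follows essentially the same route as the paper: complementarity forces $\phi_i = 0$ a.e.\ on $\Lambda$, the state-constraint multiplier $\lambda$ is shown to be constant on (a subinterval of) $\Lambda$, and the strict positivity of $\dot\psi_i^s$ from \autoref{lemma:shadow_price} yields the contradiction. The only difference is organizational: the paper bypasses your dichotomy by observing at the outset that $\dot V \ge n_i u_i s^{(i)} > 0$ together with $D_\epsilon$ constant on $W_{w,\epsilon}$ forces $V < D_\epsilon$ on all of $[\inf\Lambda,\sup\Lambda)$, so $\lambda$ is constant on $\bar\Lambda$ directly---your case $\Lambda = \Lambda^0$ is exactly the argument ruling out $V = D_\epsilon$, just deployed one step earlier rather than as a separate branch.
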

\vspace{0.2cm}
\begin{proof}
    Fix $\epsilon > 0$ and a week $w$.
    Suppose, by way of contradiction, that there exists an open interval $\Lambda \subseteq W_{w,\epsilon}$ and an index $i$ such that $u_i(t) s^{(i)}(t) \in (0, v_i^{\max})$ for almost every $t \in \Lambda$.
    Then, by the complementarity conditions~\eqref{eq:complementarity}, we have $r_i(t) = r_{i+K}(t) = 0$.
    Thus, the optimality condition~\eqref{eq:optimality_condition} implies that $\phi_i(t) = 0$ for all $t \in \Lambda$.
    Next, we observe that $\dot{V}(t) \ge n_i u_i(t) s^{(i)}(t) > 0$ for almost every $t \in \Lambda$, which implies that $V(t) < D_{\epsilon}(t)$ on $[\inf \Lambda, \sup \Lambda)$ since $D_{\epsilon}$ is constant over $W_{w,\epsilon}$.
    Hence, the multiplier $\lambda(t)$ is constant on $[\inf \Lambda, \sup \Lambda - \delta]$ for every $\delta > 0$ since it is constant over any closed interval where the state constraint is inactive.
    By the left-continuity of $\lambda$, it is constant over $\bar{\Lambda}$. 
    By \autoref{lemma:shadow_price}, $\phi_i(t)$ must be strictly increasing on $\bar{\Lambda}$, a contradiction with $\phi_i(t) = 0$ on $\Lambda$.
    Therefore, no such open interval $\Lambda$ exists, and there are no singular arcs on $W_{w,\epsilon}$.
\end{proof}

The core of the proof of \autoref{th:singular_arcs} relies on the fact that the switching functions are strictly increasing in the presence of singular arcs, a property that follows from the strict monotonicity of the adjoint variables $\psi_i^s$.
Consequently, extending the result depends on establishing the positivity of the adjoint system~\eqref{eq:adjoint_system}.

\begin{theorem}[Optimal vaccination structure]\label{th:optimal_vaccination}
    Fix \(\epsilon>0\) and \(w\in\{0,\dots,\frac{T}{7}-1\}\).
    Then, for any week $W_{w,\epsilon}$, the optimal strategy satisfies the properties listed below.
    \begin{enumerate}
        \item If vaccination is administered at any point during the week (that is, $u_i s^{(i)} > 0$), then it must have commenced at the start of the week;
        \item If vaccination is inactive at any point in a week (that is, $u_i = 0$), then it remains inactive for the rest of the week;
        \item If the vaccine stock is exhausted before the end of the week (that is, \( V(t) = D_{\epsilon}(t) \)), then vaccination ceases immediately.
    \end{enumerate}
\end{theorem}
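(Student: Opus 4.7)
The plan is to derive all three statements from a single monotonicity principle for the switching function $\phi_i$ on the week $W_{w,\epsilon}$, combined with the bang--bang structure established in \autoref{th:singular_arcs}. Since $D_\epsilon$ is by construction constant on $W_{w,\epsilon}$, and since $\lambda$ is constant on any interval where the state constraint $V<D_\epsilon$ is inactive, the expression $\phi_i(t)=(c_v+\lambda(t))n_i+\psi_i^s(t)$ differs from $\psi_i^s(t)$ only by an additive constant on any subinterval of $W_{w,\epsilon}$ where the stock is not exhausted. By \autoref{lemma:shadow_price}, $\dot\psi_i^s>0$, so $\phi_i$ is strictly increasing on every such subinterval. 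Combined with the bang--bang rules ($\phi_i>0\Rightarrow u_i=0$ and $\phi_i<0\Rightarrow u_is^{(i)}=v_i^{\max}$), this forces any crossing of zero by $\phi_i$ within $W_{w,\epsilon}$ to be of ``active-to-inactive'' type, and to occur at most once while $V<D_\epsilon$.

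I would first prove statement (3), which serves as a lemma for the others. If $V(\tau)=D_\epsilon(\tau)$ for some $\tau\in W_{w,\epsilon}$, then since $V$ is non-decreasing (because $\dot V=\sum_i n_iu_is^{(i)}\ge 0$), $D_\epsilon$ is constant on $W_{w,\epsilon}$, and $V\le D_\epsilon$ is enforced, we must have $V(t)=D_\epsilon(t)$ on $[\tau,7(w+1)-\epsilon]$. Thus $\dot V=0$ almost everywhere there, and since $s^{(j)}\ge l_j>0$ by \autoref{prop:positive_invariance_control}, each $u_j$ vanishes a.e. on that interval.

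For statement (1), suppose $u_i(t^*)s^{(i)}(t^*)>0$ for some $t^*\in W_{w,\epsilon}$. The optimality condition \eqref{eq:optimality_condition} with the complementarity \eqref{eq:complementarity} gives $\phi_i(t^*)\le 0$. Part (3) precludes $V$ from having hit $D_\epsilon$ at any time in $[7w+\epsilon,t^*]$, for otherwise $u_i(t^*)$ would vanish; hence $V<D_\epsilon$ on $[7w+\epsilon,t^*]$, so $\lambda$ is constant there. By the monotonicity argument, $\phi_i(t)<\phi_i(t^*)\le 0$ for every $t\in[7w+\epsilon,t^*)$, forcing $u_i(t)s^{(i)}(t)=v_i^{\max}$ a.e.\ on $[7w+\epsilon,t^*]$. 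For statement (2), suppose $u_i(t^*)=0$ at some $t^*\in W_{w,\epsilon}$, whence $\phi_i(t^*)\ge 0$. Let $\tau\coloneqq\inf\{t\ge t^*:V(t)=D_\epsilon(t)\}$ (set $\tau=7(w+1)-\epsilon$ if this set is empty). On $[t^*,\tau)$, $V<D_\epsilon$ and thus $\lambda$ is constant on every closed subinterval; by left-continuity of $\lambda$ the constant is unique, and the monotonicity argument yields $\phi_i(t)>\phi_i(t^*)\ge 0$ for $t\in(t^*,\tau)$, so $u_i\equiv 0$ there. On $[\tau,7(w+1)-\epsilon]$, statement (3) gives $u_i\equiv 0$ as well.

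The principal delicate point will be the careful bookkeeping at the interface between the ``free'' regime $V<D_\epsilon$ and the ``boundary'' regime $V=D_\epsilon$: one must invoke the left-continuity of $\lambda$ to propagate its constancy up to the hitting time $\tau$ and then patch with the argument from (3) to cover $[\tau,7(w+1)-\epsilon]$. Once this is handled, the strict monotonicity of $\phi_i$ provided by \autoref{lemma:shadow_price} --- the very same tool that ruled out singular arcs in \autoref{th:singular_arcs} --- delivers all three structural properties directly.
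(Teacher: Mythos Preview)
Your proposal is correct and follows essentially the same approach as the paper: both proofs exploit that $D_\epsilon$ is constant on $W_{w,\epsilon}$, that $\lambda$ is constant (up to the hitting time, using left-continuity) where $V<D_\epsilon$, and that $\dot\psi_i^s>0$ from \autoref{lemma:shadow_price} forces $\phi_i$ to be strictly increasing there. The only organizational difference is that the paper splits into the two cases ``$V$ never reaches $D_\epsilon$ during the week'' versus ``it does,'' derives the explicit one-switch profile $u_is^{(i)}=v_i^{\max}$ on $[7w+\epsilon,\tau_i^*)$ and $0$ afterwards, and then reads off (1)--(3); you instead prove (3) first and feed it into (1) and (2), which is arguably cleaner but mathematically equivalent.
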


\begin{proof}
    Fix $\epsilon>0$ and a week $w$. 
    If no time in $t \in W_{w,\epsilon}$ satisfies $V(t)=D_\epsilon(t)$, then $\lambda(t)$ is constant over $W_{w,\epsilon}$ since the interval is closed.
    By \autoref{lemma:shadow_price}, the switching function $\phi_i(t)$ is strictly increasing for every $\fori{i}$.
    Hence, for each $i$, if $u_i(t)s^{(i)}(t) > 0$ at some time $t$, then $\phi_i(\tau) \le 0$ for every $\tau \in [7w+\epsilon, t]$, which implies that $u_i(\tau)s^{(i)}(\tau) > 0$.
    Otherwise, $u_i(t) = 0$ for the whole interval $W_{w,\epsilon}$ because $\phi_i(7w+\epsilon) > 0$.
    Additionally, there is at most one point $\tau^*_i$ such that $\phi_i(\tau_i^*) = 0$.
    
    Now, suppose there exists $t\in W_{w,\epsilon}$ such that $V(t)=D_\epsilon(t)$ and define $t^* \coloneqq \inf\{t \ge 7w+\epsilon: V(t)=D_\epsilon(t)\}$.
    For $t \in  [t^*,7(w+1)-\epsilon)$, the state constraint is active, so $V$ is constant, and therefore, $u_i(t) = 0$ for all $\fori{i}$.
    If $t^* = 7w + \epsilon$, the same argument applies. 
    Alternatively, for $t\in [7w+\epsilon,t^*)$, the state constraint is inactive since $D_{\epsilon}$ is constant.
    Therefore, $\lambda$ is constant over $[7w+\epsilon,t^*-\delta]$ for every $\delta > 0$.
    The left-continuity of $\lambda$ implies it is constant over $[7w+\epsilon,t^*]$.
    In this interval, the switching function $\phi_i(t)$ is strictly increasing, yielding $u_i(t)s^{(i)}(t)=v_i^{\max}$ for almost every $t \in [7w+\epsilon,\tau_i^*]$, for some switching time $\tau_i^* \in (7w+\epsilon, t^*)$ such that $\phi_i(\tau_i^*) = 0$ if there exists a solution.
    If not, we either have $\phi_i < 0$, which implies that $u_i s^{(i)} = v_i^{\max}$ for $t \in [7w + \epsilon, t^*)$ and we set $\tau_i^* = t^*$, or $\phi_i > 0$, which implies $u_i = 0$ for the whole week and we set $\tau_i^* = 7w + \epsilon$.

    Joining both cases, we guarantee that the optimal control has the structure
    \[
    u_i(t)s^{(i)}(t)=
    \begin{cases}
    v_i^{\max}, & t\in [7w+\epsilon,\tau_i^*), \\
    0, & t\in [\tau_i^*,7(w+1)-\epsilon],
    \end{cases}
    \]
    which precisely implies the three claims of the theorem.
\end{proof}

\autoref{th:optimal_vaccination} {\it fully characterizes the optimal control structure} by showing that within each week, the control is {\bf bang--bang with at most one switching time} $\tau_i^*,$ for each $\fori{i}$.
This bang--bang structure indicates that the optimal strategy is to {\bf vaccinate at maximum capacity until reaching the supply constraint} (i.e., until exhausting the weekly vaccine allocation).
However, this may not be feasible in practice, since it depends on individuals presenting for vaccination; hence, a more realistic implementation would be opening the vaccination facilities at maximum capacity until the vaccines for that week are over.
This gives an easily implementable practical solution that other general continuous time-varying controls would not have provided. 
A consequence of this finding is that, rather than optimizing over all admissible controls, we only need to optimize the switching times, that is, the optimal control problem reduces to optimizing \(K\cdot(T/7)\) variables -- one for each region and week -- provided that $\epsilon$ is chosen sufficiently small.

\section{Numerical experiments}
\label{sec:numerics}

We validate our theoretical findings by simulating optimal vaccination strategies in three cases (networks with $3$, $5$, and $8$ cities, each city following a simple SIR model with daily commuting
between them).
All simulations share a 28-day horizon (\(T=28\)), recovery rate
\(\gamma=1/7\) day\(^{-1}\), and home-stay fraction
\(\alpha=0.64\) (workers spend \(1-\alpha\) of each week in the
destination city).  
City-specific transmission rates \(\beta_i\in[0.10,0.35]\), populations, shipment limits \(V^{\max}_w\), and vaccination capacities \(v_i^{\max}\).
The specific values are given in the Appendix (page~\pageref{appendix:parameters}).

\autoref{fig:optimal_controls} displays the optimal control profiles and the proportion of infected individuals for these cases. 
In all examples, for each week, vaccination is administered at the maximum rate until a unique switching time, after which it is abruptly stopped, which agrees with our theoretical findings.

We solve the optimal control problem~\eqref{eq:optimal_control_problem} with a {\it first discretize, then optimize\/} approach using the Gekko Python library~\cite{beal2018gekko}, which calls IPOPT for non-linear optimization.
All experiments were performed on a Mac equipped with an Apple M3 processor (8 cores) and 16 GB of memory.

\begin{figure}[!ht]
  \centering
  \includegraphics[width=\linewidth]{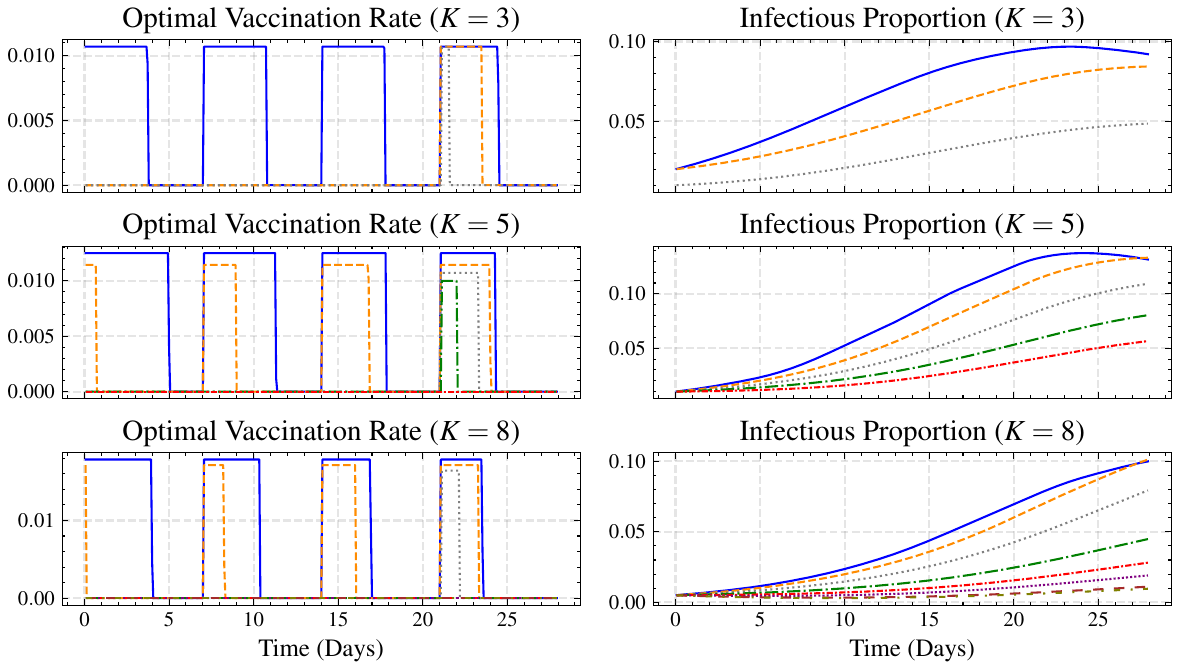}
  \caption{Optimal weekly vaccination profiles (left) and resulting infectious proportion (right) for metapopulation SIR networks with $K=3,5,8$ cities.
  The legend is omitted as the optimal control structure is of interest rather than the specific city identities.}
  \label{fig:optimal_controls}
\end{figure}
 
We further test the impact of nonzero and linear migration terms $q_j$ (violating Assumption 1) in two experiments with $K=3$ and $K=5$ cities.
\autoref{fig:q_j_nonzero} indicates that the adjoint variables are strictly increasing, which corroborates Lemma 1. 
This result suggests that even with nonzero $q_j$, the bang--bang structure remains intact, and Theorem 2's result can also be observed.
We also solved the problem, but with a quadratic vaccination cost.
\autoref{fig:figure2} indicates that the quadratic objective yields a smoother control that approximates the bang-bang profile; the resulting cost difference is negligible (bottom-right panel). The control resulting from the quadratic cost is harder to implement in practice than the bang-bang given in the linear-cost case.

\begin{figure}[!htbp]
  \centering
  \includegraphics[width=\linewidth]{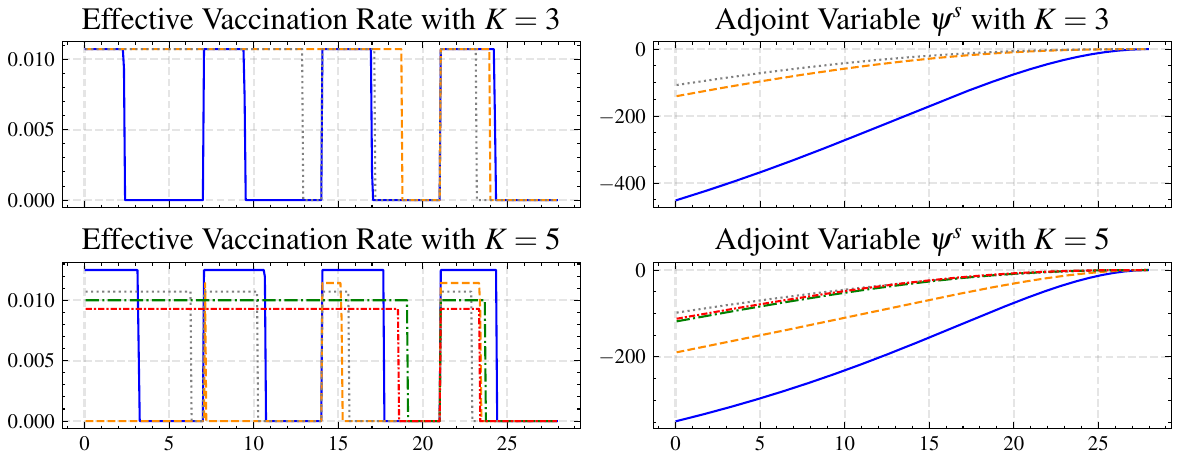}
  \caption{Optimal vaccination control profiles and adjoint variables $\psi^s$ for two network structures with $K=3$, and $5$ in the presence of linear and non-zero functions $q_j$.}
  \label{fig:q_j_nonzero}
\end{figure}

\begin{figure}[!htbp]
\centering
\includegraphics[width=\linewidth]{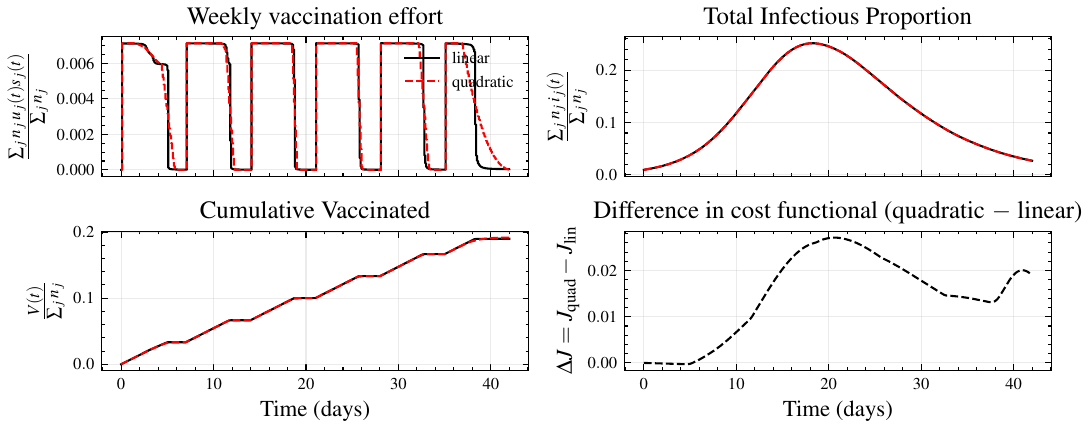}
\caption{Linear vs.\ quadratic cost for the 5-city case.
Top-left: weekly vaccination effort; top-right: total infectious proportion; bottom-left: cumulative vaccinations; bottom-right: running difference $J_{\text{quad}}-J_{\text{lin}},$ where $J_{\text{lin}}$ is the linear cost \eqref{eq:cost_function}, $J_{\text{quad}}$ replaces $V(T)$ by $\smash{\sum_j\!\int_0^T n_j u_j^2 s_j^2}$.
The coefficient \(c_v\) in the quadratic case is scaled so that the total vaccination cost is comparable to the linear one, making the cost difference illustrative.}
\label{fig:figure2}
\end{figure}

\section{Conclusion}
\label{sec:conclusion}

This work introduces a metapopulation epidemic model with vaccination that incorporates both state and mixed constraints with a linear cost.
In particular, \autoref{prop:positive_invariance_control}, \autoref{th:singular_arcs}, and \autoref{th:optimal_vaccination} are essential to establish positivity and invariance properties and fully characterize the bang--bang structure of optimal control with a finite number of switching points.
Consequently, the optimal control problem reduces to optimizing over the switching times only, which promises more efficient and reliable numerical algorithms.
Future work could focus on relaxing \autoref{assumption:linearity_assumption} to expand the applicability of the model and build an efficient algorithm to compute optimal switching times.
Analysis of closed-loop stability of infectious compartments is another interesting direction for future work.

\section*{APPENDIX}

\section*{Proofs of Auxiliary Results}
\label{sec:appendixProofs}

This appendix contains proofs of auxiliary results, including lemmas and supporting propositions not in the main text. 

\begin{lemma}
\label{lemma:positivity}
    Let $ f:[0,T] \times \mathbb{R}^n \times \mathbb{R}^m \to \mathbb{R}^n $ be continuous, with $ f(t,\cdot,u) $ locally Lipschitz for each $ t $ and $ u $.
    Fix a control \(u\in L^{\infty}([0,T];\R^{m}_{\ge0})\) and assume the solution \(x(\cdot)\) of
    \[
    x'(t)=f\bigl(t,x(t),u(t)\bigr),\qquad x(0)=x_{0}\in\R^{n}_{\ge0},
    \]
    exists on the entire interval \([0,T]\).
    If \(f\) is \emph{essentially non-negative}, i.e.
    \[
    x\ge0,\;x_{i}=0\;\Longrightarrow\;f_{i}(t,x,u)\ge0,
    \]
    then \(x(t)\in\R^{n}_{\ge0}\) for every \(t\in[0,T]\); in other words, \(\R^{n}_{\ge0}\) is forward-invariant.
\end{lemma}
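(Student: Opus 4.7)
My plan is to rule out the solution leaving the non-negative orthant by constructing a non-negative Lyapunov-type function from the componentwise negative part of $x(t)$ and controlling it with Gronwall's inequality. Writing $x^{-}_{i}(t) \coloneqq \max(-x_{i}(t),0)$ and $x^{+}(t) \coloneqq x(t)+x^{-}(t) \in \R^{n}_{\ge 0}$, the claim $x(t)\in\R^{n}_{\ge 0}$ is equivalent to $V(t) \coloneqq \sum_{i=1}^{n} x^{-}_{i}(t) = 0$, and the hypothesis $x_{0}\in\R^{n}_{\ge 0}$ gives $V(0)=0$.

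The first step is to observe that $V$ is absolutely continuous, being a finite sum of compositions of the $1$-Lipschitz map $r\mapsto\max(-r,0)$ with the absolutely continuous coordinates $x_{i}$; the chain rule then yields
\[
\dot V(t) \;=\; -\sum_{i=1}^{n} \mathbf{1}_{\{x_{i}(t)<0\}}\,f_{i}\bigl(t,x(t),u(t)\bigr) \qquad \text{a.e.}
\]
On the set $\{x_{i}(t)<0\}$ one has $x^{+}_{i}(t)=0$ while $x^{+}(t)\ge 0$ componentwise, so essential non-negativity of $f$ gives $f_{i}(t,x^{+}(t),u(t))\ge 0$; combined with local Lipschitz continuity of $f(t,\cdot,u)$ on the compact set traced out by the trajectory (with a constant $L$, essentially bounded in $t$ because $u\in L^{\infty}$), this produces
\[
-f_{i}(t,x(t),u(t)) \;\le\; f_{i}(t,x^{+}(t),u(t)) - f_{i}(t,x(t),u(t)) \;\le\; L\,\|x^{-}(t)\|.
\]
Summing over $i$ gives $\dot V(t)\le C\,V(t)$ for some $C>0$, and Gronwall together with $V(0)=0$ forces $V\equiv 0$ on $[0,T]$, which is the desired invariance.

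The step I expect to be the main obstacle is the rigorous derivation of the a.e.\ formula for $\dot V$ in the Carath\'eodory setting, since the map $r\mapsto\max(-r,0)$ is not differentiable at $0$, so one must invoke the chain rule for Lipschitz compositions with absolutely continuous functions and argue that the contribution from the negligible set $\{x_{i}=0\}$ can be assigned arbitrarily. A secondary technical point is the uniformity of the Lipschitz constant along a trajectory that has not yet been shown to be non-negative; this is resolved because the solution is assumed to exist on the whole of $[0,T]$, hence is bounded and contained in a compact subset of $\R^{n}$ on which $f(t,\cdot,u(t))$ is uniformly Lipschitz. If either step turns out to be fragile, an alternative and possibly cleaner route is a Nagumo-type perturbation: solving $\dot y_{\varepsilon}=f(t,y_{\varepsilon},u)+\varepsilon\mathbf{1}$ with $y_{\varepsilon}(0)=x_{0}+\varepsilon\mathbf{1}$, one shows via essential non-negativity that no coordinate of $y_{\varepsilon}$ can touch zero on $[0,T]$, and passes to the limit by continuous dependence on initial data and forcing.
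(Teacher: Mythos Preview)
The paper does not give a proof of this lemma at all: it writes only that ``the proof of the lemma follows from standard results on dynamical systems and is omitted here.'' Your proposal therefore goes well beyond what the paper provides, and the argument via the negative-part functional $V(t)=\sum_i x_i^{-}(t)$ combined with Gronwall is a correct and standard route to forward invariance under essential non-negativity; the Nagumo-type perturbation you sketch as a fallback is the other classical route and is equally acceptable.

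One minor remark on the secondary technical point you flag: the hypotheses of the lemma as stated---joint continuity of $f$ together with local Lipschitz continuity in $x$ for each \emph{fixed} $(t,u)$---do not by themselves force the Lipschitz constant $L(t)$ along the trajectory to be essentially bounded (or even integrable) in $t$, so your sentence ``essentially bounded in $t$ because $u\in L^{\infty}$'' is slightly optimistic. In practice one assumes local Lipschitz in $x$ uniformly on compact $(t,u)$-sets, which holds automatically whenever $f$ is $C^{1}$ in $x$; since the vector fields in the paper are $C^{2}$, this is not an obstacle here.
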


\begin{proof}
    The proof of the lemma follows from standard results on dynamical systems and is omitted here.
\end{proof}

Now we present the proof to \autoref{lemma:shadow_price}.

\begin{proof}\label{proof:shadow_price}
Define the variables $z^{(i)}(t) \in \R$, $y^{(i)}(t) \in \R^d$, $w^{(i)}(t) \in \R^d$, and $\varphi^{(i)}(t) \in \R^d$ in the following way: 
$z^{(i)}(t) \coloneqq \psi_i^s(T-t)-\psi_1^{x^{(i)}}(T-t)$, $y^{(i)}(t) \coloneqq -\psi^{x^{(i)}}(T-t)^\top$, $w^{(i)}(t) \coloneqq \sum_{j=1}^K z^{(j)}(t)s^{(j)}(T-t) \beta_{ij} + c n_i,$ $\varphi^{(i)}(t) \coloneqq w^{(i)}(t) + G^T y^{(i)}(t)$,
\if{\begin{align*}
    z^{(i)}(t) &\coloneqq \psi_i^s(T-t)-\psi_1^{x^{(i)}}(T-t),  \\
    y^{(i)}(t) &\coloneqq -\psi^{x^{(i)}}(T-t)^\top, \\
    w^{(i)}(t) &\coloneqq \sum_{j=1}^K z^{(j)}(t)s^{(j)}(T-t) \beta_{ij} + c n_i, \\
    \varphi^{(i)}(t) &\coloneqq w^{(i)}(t) + G^T y^{(i)}(t),
\end{align*}}\fi
and note that the initial conditions are $z^{(i)}(0) = y^{(i)}(0) = 0$, $w^{(i)}(0) = cn_i$ and $\varphi^{(i)}(0) = cn_i$.
Using the definitions, these variables satisfy the system
\begin{align*}
    \dot{z}^{(i)}(t) &= -z^{(i)}(t) f_i(\boldsymbol{x}(T-t)) + \varphi^{(i)}_1(t), \\
    \dot{y}_i(t) &= \varphi^{(i)}(t), \\
    \dot{w}^{(i)}(t)
    &= \sum_{j=1}^K \beta_{ij} s^{(j)}(T-t) \left(\varphi^{(j)}_1(t) + u_j(T-t) z^{(j)}(t) \right), \\
    \dot\varphi^{(i)}(t) &= \dot{w}^{(i)}(t) + G^T \varphi^{(i)}(t),
\end{align*}
where we can notice that 
\[
z^{(i)}(t) = \int_0^t e^{-\int_0^\tau f_i(\boldsymbol{x}(T-\xi)) d\xi} \varphi^{(i)}_1(\tau) \, d\tau.
\]

Since $\beta_{ij}s^{(j)}(T-t) \ge 0$, $u_j(t) \ge 0$, and $G$ is a Metzler matrix, the coupled system between $z^{(i)}$ and $\varphi^{(i)}$ is essentially non-negative.
Since $z^{(i)}(0) \ge 0$ and $\varphi^{(i)}(0) \ge 0$, for all $t \in [0,T]$, by the essentially non-negative property, $z^{(i)}(t) \ge 0$ and $\varphi^{(i)}(t) = \dot{y}^{(i)}(t) \ge 0$.
Therefore, if $z^{(i)}(t^*) = 0$ for some $t^* > 0$, this implies $z^{(i)}(t) = 0$ for all $t \in [0,t^*]$.
Suppose $c_1 > 0$.
Then $\dot{z}^{(i)}(0) = c_1 n_i > 0$, which implies that there exists some $\delta > 0$ such that $z^{(i)}(t) > 0$ for all $t \in (0,\delta)$.
Consequently, $z^{(i)}(t) > 0$ for all $t > 0$.
Notice that, for all $k \in \{1,\dots, d\}$, 
\begin{equation*}
\begin{aligned}
    y_{k}^{(i)}(t) &= \int_0^t e^{G_{kk}(t-\tau)} \bigg( 
    \sum_{j\neq k} G_{jk} y_j^{(i)}(\tau) \\
    &\quad + \sum_{j=1}^K z^{(j)}(\tau) s^{(j)}(T-\tau) (\beta_{ij})_k 
    + c_k n_i \bigg) \, d\tau.
\end{aligned}
\end{equation*}
Take $l \in \{2,\dots,d\}$ with $c_l > 0$ and the sequence $1=i_0, i_1,\dots,i_m=l$ such that $G_{i_{k+1}, i_k} > 0$ for all $k \in \{0,\dots,m-1\}$ (see \autoref{remark:assumption}).
Then
\[
y_{l}^{(i)}(t) \ge \int_0^t e^{G_{ll}(t-\tau)} c_l n_i \, d\tau > 0, \quad \forall t > 0.
\]
Suppose that $y_{i_j}^{(i)}(t) > 0$ for all $t > 0$.
Then, for all $t>0$,
\[
y_{i_{j-1}}^{(i)}(t) \ge \int_0^t e^{G_{i_{j-1} i_{j-1}}(t-\tau)} G_{i_j i_{j-1} }y^{(i)}_{i_j}(\tau) \, \, d \tau > 0.
\]
By induction, we conclude that $y_1^{(i)}(t) > 0$ for all $t > 0$.

Suppose that $z^{(i)}(t^*) = 0$ for some $t^* > 0$, then $z^{(i)}(t) = 0$ for all $t \in [0,t^*]$.
Therefore, $\varphi_1^{(i)}(t) = 0$ almost everywhere in $[0,t^*]$.
By the initial condition $y_1^{(i)}(0) = 0$ with $y_1^{(i)}(t) > 0$ for $t > 0$, the Lebesgue Differentiation Theorem implies that $\varphi_1^{(i)}(\tau) > 0$ for some $\tau \in (0,t^*)$, which is a contradiction.
Therefore, $z^{(i)}(t) > 0$ for all $t > 0$.
It is straightforward to conclude that $0 > \psi_i^s(t) > \psi_1^{x^{(i)}}(t)$ for $t \in [0,T)$.
Finally, since $f_i(\boldsymbol{x}(t)) > 0$ we conclude that $\dot{\psi}_i^s(t) = z^{(i)}(T-t) f_i(\boldsymbol{x}(t)) > 0$ for $t \in [0,T)$.
\end{proof}

\section*{Simulation Parameters}
\label{appendix:parameters}

All experiments share the following global parameters:
\begin{itemize}
  \item Time horizon: $T = 28$ days
  \item Recovery rate: $\gamma = 1/7$ day$^{-1}$
  \item Home-stay fraction: $\alpha = 0.64$
  \item Weekly vaccine shipment: $V_w^{\max} = \{1, 2, 3, 4\} / 30$
  \item Cost coefficients: $c_v = 0.01$, $c_h = 100$
  \item Solver tolerances: $\texttt{RTOL} = \texttt{OTOL} = 10^{-6}$
\end{itemize}

The following parameters vary by experiment:

\textbf{3 Cities.}
\begin{itemize}
  \item $\beta = [0.3,\ 0.2,\ 0.1]$
  \item Normalized populations: [0.83, 0.083, 0.083]
  \item $s_0 = [0.96,\ 0.97,\ 0.95]$
  \item $i_0 = [0.02,\ 0.02,\ 0.01]$
  \item $v_i^{\max} = 0.3 / T$
  \item Commuting matrix:
  \[
  \begin{bmatrix}
    0.9 & 0.05 & 0.05 \\
    0.45 & 0.45 & 0.10 \\
    0.45 & 0.10 & 0.45
  \end{bmatrix}
  \]
\end{itemize}

\textbf{5 Cities.}
\begin{itemize}
  \item $\beta = [0.35,\ 0.3,\ 0.25,\ 0.2,\ 0.15]$
  \item Normalized populations: [0.5, 0.3, 0.1, 0.05, 0.05]
  \item $s_0 = [0.97,\ldots,0.97]$
  \item $i_0 = [0.01,\ldots,0.01]$
  \item $v_i^{\max} = [0.35,\ 0.32,\ 0.3,\ 0.28,\ 0.26] / T$
  \item Commuting matrix:
  \[
  \begin{bmatrix}
    0.8 &  0.05 & 0.05 & 0.05 & 0.05 \\
    0.1 &  0.7  & 0.1  & 0.05 & 0.05 \\
    0.05 & 0.1  & 0.7  & 0.1  & 0.05 \\
    0.05 & 0.05 & 0.1  & 0.7  & 0.1  \\
    0.05 & 0.05 & 0.05 & 0.1  & 0.75
  \end{bmatrix}
  \]
\end{itemize}

\textbf{8 Cities.}
\begin{itemize}
  \item $\beta = [0.35, 0.3, 0.25, 0.2, 0.15, 0.1, 0.05, 0.04]$
  \item Normalized populations: [0.46, 0.26, 0.13, 0.06, 0.03, 0.02, 0.007, 0.007]
  \item $s_0 = [0.98,\ldots,0.98]$
  \item $i_0 = [0.005,\ldots,0.005]$
  \item $v_i^{\max} \!=\! [0.5, 0.48, 0.46, 0.44, 0.42, 0.4, 0.38, 0.36] / T$
  \item Commuting matrix:
  \[
  \begin{bmatrix}
    0.7 & 0.1 & 0.05 & 0.05 & 0.03 & 0.03 & 0.02 & 0.02 \\
    0.1 & 0.7 & 0.1 & 0.02 & 0.02 & 0.02 & 0.02 & 0.02 \\
    0.05 & 0.1 & 0.75 & 0.02 & 0.02 & 0.02 & 0.02 & 0.02 \\
    0.05 & 0.02 & 0.02 & 0.75 & 0.05 & 0.05 & 0.03 & 0.03 \\
    0.03 & 0.02 & 0.02 & 0.05 & 0.8 & 0.03 & 0.02 & 0.03 \\
    0.03 & 0.02 & 0.02 & 0.05 & 0.03 & 0.8 & 0.03 & 0.02 \\
    0.02 & 0.02 & 0.02 & 0.03 & 0.02 & 0.03 & 0.85 & 0.01 \\
    0.02 & 0.02 & 0.02 & 0.03 & 0.03 & 0.02 & 0.01 & 0.85
  \end{bmatrix}
  \]
\end{itemize}

\bibliographystyle{IEEEtran}
\bibliography{biblio}

\end{document}